\DeclareMathOperator{\Aut}{Aut}
\DeclareMathOperator{\Gl}{GL}
\DeclareMathOperator{\Id}{Id}
\DeclareMathOperator{\Hom}{Hom}
\newcommand{\N}{{\mathbb N}}
\newcommand{\Z}{{\mathbb Z}}
\newcommand{\ds}{\displaystyle}
\newtheorem{theorem}{Theorem}[section]
\newtheorem{proposition}[theorem]{Proposition}
\newtheorem{lemma}[theorem]{Lemma}
\newtheorem{corollary}[theorem]{Corollary}
\begin{document}

\title{The $R_\infty$ property for abelian groups}
\author[K.\ Dekimpe]{Karel Dekimpe}
\author[D.\ Gon\c{c}alves]{Daciberg Gon\c{c}alves}
\address{KU Leuven Kulak\\
E. Sabbelaan 53\\
8500 Kortrijk\\
Belgium}
\email{karel.dekimpe@kuleuven-kulak.be}
\address{Departamento de Matem\'atica - IME, Universidade de S\~ao Paulo\\ Caixa Postal 66.281 - CEP 05314-970\\ 
S\~{a}o Paulo\\
Brasil}
\email{dlgoncal@ime.usp.br}

\begin{abstract}

It is well known there is no finitely generated abelian group which has the $R_\infty$ property. We will show that also many non-finitely generated abelian groups 
do not have the $R_\infty$ property, but this does not hold for all of them! In fact we construct an uncountable number of infinite countable abelian groups  
which do have the $R_{\infty}$ property. 
We also construct an abelian group such that the cardinality of the Reidemeister classes is uncountable for any automorphism of that group.  
\end{abstract}

\maketitle
\section{Introduction}

Let $G$ be a group and $\varphi$ be an endomorphism of $G$. Then two elements 
$x,y$ of $G$ are said to be Reidemeister equivalent (with respect to $\varphi)$, if there exists an element 
$z\in G$ such that $y = z x \varphi(z)^{-1}$. The equivalence classes  are called the Reidemeister classes or twisted conjugacy classes. 

%The study of the Reidemeister classes for abelian groups suggest that the following slightly variation of the usual definition of the Reidemeister number possible is more suitable. 

\medskip

{\it Definition:  The Reidemeister number of a homomorphism $\varphi$, denoted by  
$R(\varphi)$, is the cardinality  of the Reidemeister classes of $\varphi$}. 
We remark here that most authors define the Reidemeister number as either a positive integer or $\infty$. This latter definition of course coincides with ours in the finite case, but does not allow to make a distinction between the 
various infinite cases.

\medskip

The Reidemeister number is a relevant ingredient in connection with many parts of mathematics. See for example \cite {FLT} and references therein. This is for instance also the case in the  study  of the fixed point properties of the homotopy class of a self map on a topological space. In this situation, the group $G$ will be the fundamental group $\pi_1(X)$ of the space and the 
homomorphism $\varphi=f_\sharp$ is the one which is induced by the map $f$ on the fundamental group $G$. Under certain hypothesis the 
Reidemeister number $R(\varphi)$ is then exactly the number of essential  fixed point 
classes of $f$ if $R(\varphi)$ is finite and the number of essential  fixed point 
classes of $f$ is zero if  $R(\varphi)$ is infinite. See \cite{j} and \cite{w} and the references 
therein for more information.

\medskip

A group $G$ has the   
$R_\infty$ property if for every automorphism $\varphi$ of $G$ the Reidemeister number is not 
finite. In recent years many works have studied the question of 
which groups $G$ have the 
$R_\infty$ property.  We refer to \cite{FLT} for an overview of the results which have been obtained in this direction. 
The present work will also give a contribution for this problem,  where 
we will consider infinite abelian groups. If an abelian group $A$ is finitely generated then 
it is well known that $A$ does not have the   $R_\infty$ property, since it is easy to see that the automorphism 
$\varphi:A \rightarrow A:a \mapsto -a$ has a finite Reidemeister number in this case. So, in this paper, we will focus on abelian groups which are not 
finitely generated.   For information about infinite abelian groups in general  we refer to 
\cite{fu1}, \cite{fu2} and \cite{ka}.

\medskip

To the best of our knowledge, up till now, there is no example in literature of  an abelian group having the $R_{\infty}$ property. In this paper we do construct  an uncountable number of countable abelian groups  which do have the $R_{\infty}$ property.

\medskip

Before we announce the main results of this paper, let us fix some notation
\begin{itemize}
\item Let $p$ be a prime, then with $\Z_p$, we will denote the additive group of $p$-adic integers.
\item For any positive integer $n\geq 2$, $\Z/n\Z$ will denote the additive group of integers modulo $n$.
\item Let ${\mathcal P}$ be any set of primes, then $\Z_{\mathcal P}$ denotes the additive group of rational numbers which can be written as a fraction whose denominator is relative prime with all primes in ${\mathcal P}$. When 
$p$ is a prime, then $\hat{p}$ is the set of all primes which are different from $p$ and hence $\Z_{\hat{p}}$ is the group 
of all rational numbers whose denominator is a power of $p$. 
\item Finally, when $p$ is a prime $\Z(p^\infty)$ is the Pr\"ufer group $\ds\frac{\Z_{\hat{p}}}{\Z}$.  
\end{itemize}

\medskip

Recall that a group $G$ is divisible if and only if for any $x\in G$ and any positive integer $n$, there is a $y\in G$ such
that $y^n=x$.

\medskip

We can now formulate the  main results of this note:

\medskip

{\bf Proposition} \ref{notR} The following abelian groups do not 
have the $R_{ \infty}$ property:
\begin{enumerate}
\item Abelian divisible groups. 
\item The groups $\Z_{\mathcal P}$ for any set of primes $\mathcal{P}$ 
\item The $p$-adic  groups $\Z_p$ for any prime $p$.
\item  Any abelian torsion group without $2$-torsion elements.
\end{enumerate}

\medskip

In the formulation of the following result we use the 
Reidemeister spectrum of a group $A$ which is the set $\{ R(\varphi)\;|\; \varphi\in \Aut(A)\}$:

\medskip

{\bf Proposition} \ref{spect}  For any prime $p\ne 2$, the  spectrum of $\Z_{\hat{p}}$ is 
\[\{2\} \cup \{ p^m+1 \;|\; m \in \N\} \cup \{ p^m-1 \;| \;m \in \N\} \cup\{  \infty\}.\]
Hence if     $\varphi: \Z_{\hat{p}} \to\Z_{\hat{p}}$ is an automorphism and $p\ne 2$, then $R(\varphi)\ne 1$.

\medskip

{\bf Theorem} \ref{countex} Let  $\mathcal P$ be an  infinite set of primes and consider the group  
$\displaystyle\bigoplus \Z_{\hat p}$
as $p$ runs over the set $\mathcal P$.  Then any automorphism of this group has infinite Reidemeister number.

\medskip

{\bf Theorem} \ref{uncountex} Let  $\mathcal P$ be an  infinite set of primes and consider the group  
$\displaystyle \prod  \Z_{\hat p}$
as $p$ runs over the set $\mathcal P$.  Then any automorphism of this group has the property that the set of Reidemeister classes is uncountable. In particular this group also has the $R_{\infty}$ property.

\bigskip

This work is divided into 3 sections besides the introduction. In section 2 we recall a few elementary properties of the infinite abelian groups.  In section 3 we show that many infinite abelian groups do not have the $R_{\infty}$ property. Groups constructed using standard constructions like direct sums and direct products are analysed. In section 4 we provide examples of countable and uncountable abelian groups
which have the $R_{\infty}$ property. Finally we present an example of a group having the property that 
for any automorphism the  Reidemeister number is always uncountable.

\section{Preliminaries about infinite abelian groups}
In this section we recall some known results  about infinite abelian groups and prove some elementary facts about these groups which are used in our study.  Let $A$ be an abelian group and $\varphi:A \to A$ a 
homomorphism of $A$. Whenever we need to have 
$\varphi$  an automorphism we make this explicit.

From \cite{ka}, Theorem 3, page 9 we have:

 \begin{theorem}\label{div} Any  abelian group $A$ has a unique largest divisible subgroup $M$, and $A=M \oplus N$ where $N$ has no non-zero divisible subgroups.
\end{theorem}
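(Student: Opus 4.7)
The plan is to prove the two assertions separately: first the existence and uniqueness of a largest divisible subgroup, and then the splitting statement.

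For the first part, I would let $\{D_i\}_{i\in I}$ be the family of all divisible subgroups of $A$ and define $M=\sum_{i\in I} D_i$. This $M$ is clearly the unique largest divisible subgroup provided it is divisible itself, so the task reduces to checking that divisibility is preserved under sums. Given $x\in M$ and $n\geq 1$, write $x=d_{i_1}+\cdots+d_{i_k}$ with $d_{i_j}\in D_{i_j}$; divisibility of each $D_{i_j}$ supplies elements $y_j\in D_{i_j}$ with $n y_j=d_{i_j}$, and then $y=y_1+\cdots+y_k\in M$ satisfies $ny=x$. Uniqueness is automatic from maximality.

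For the splitting $A=M\oplus N$, the key input is that a divisible abelian group is an injective object in the category of abelian groups (Baer's criterion: a $\Z$-module $D$ is injective iff every homomorphism from an ideal of $\Z$, i.e.\ from $n\Z$, extends to $\Z$; for divisible $D$ this is trivially satisfied). Applying injectivity to the identity $\operatorname{Id}_M:M\to M$ and the inclusion $M\hookrightarrow A$, I obtain a retraction $r:A\to M$ with $r|_M=\operatorname{Id}_M$. Setting $N=\ker r$ then gives the internal direct sum $A=M\oplus N$.

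It remains to show that $N$ contains no non-zero divisible subgroup. If $D\leq N$ is divisible, then $D$ is a divisible subgroup of $A$, hence $D\subseteq M$ by maximality of $M$; but $D\subseteq N$ and $M\cap N=0$, so $D=0$.

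The main obstacle is the splitting, which rests on the non-trivial fact that divisible abelian groups are injective; everything else is bookkeeping. Since the paper treats Theorem \ref{div} as a quoted result from \cite{ka}, one could alternatively cite this directly, but the proof sketch above is the standard route via Baer's criterion.
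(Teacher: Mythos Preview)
Your proof is correct and follows the standard route (sum of divisibles is divisible; divisible $\Rightarrow$ injective via Baer's criterion; injectivity yields a retraction and hence a complement; maximality of $M$ forces the complement to be reduced). There is nothing to compare against, since the paper does not prove this theorem at all: it is simply quoted as Theorem~3, page~9 of \cite{ka}, exactly as you anticipated in your final paragraph. So your sketch goes strictly beyond what the paper provides, and is the argument one finds in Kaplansky's text.
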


This theorem shows the relevance of the divisible groups for the description of the infinite abelian groups. 
An abelian group having no non-zero divisible subgroup is called a reduced group. When $M$ is the maximal 
divisible subgroup of an abelian group $A$ we will call $A/M$ the reduced part of $A$. 

\medskip

 Because 
the groups in question are abelian groups, it follows that the Reidemeister number of an endomorphism $\varphi$ of 
such an abelian group $A$ coincides with the cardinality of the quotient group $A/{\rm Im}(\varphi-\Id_A)$
(or $A/{\rm Im}(\Id_A-\varphi)$).  

\medskip

Now we prove a lemma which is on the one hand very simple but on the other hand 
very useful to  show that  many abelian groups do not have the $R_{\infty}$ property. 

\begin{lemma}\label{ind} Let $A$ be an abelian group and consider the homomorphism $2: A \to A: a\mapsto 2(a) = a+ a$ (so multiplication by $2$).
 Then: 
 \begin{enumerate}
\item The Reidemeister number of this  homomorphism is 1.
\item If $2(A)$, the image of the homomorphism, has finite index in $A$, then the automorphism $\tau: A \to A$ given by $x\mapsto -x$ has Reideimeister number equal to the index of $2(A)$ in $A$.
\end{enumerate}

\end{lemma}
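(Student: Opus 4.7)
The plan is to invoke the formula recalled immediately before the lemma statement: for an endomorphism $\varphi$ of an abelian group $A$, the Reidemeister number $R(\varphi)$ equals the cardinality of $A/\mathrm{Im}(\mathrm{Id}_A-\varphi)$. Once this is in hand, both parts reduce to a direct computation of the image of the endomorphism $\mathrm{Id}_A-\varphi$ for the specific $\varphi$ at issue.

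For part (1), I would set $\varphi=2$, that is $\varphi(a)=a+a$, and compute
\[
(\mathrm{Id}_A-\varphi)(a)=a-2a=-a,
\]
so the image of $\mathrm{Id}_A-\varphi$ is all of $A$ (since $a\mapsto -a$ is surjective, being an involution on $A$). Consequently $A/\mathrm{Im}(\mathrm{Id}_A-\varphi)$ is trivial, giving $R(2)=1$.

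For part (2), I would take $\varphi=\tau$ with $\tau(a)=-a$ and compute
\[
(\mathrm{Id}_A-\tau)(a)=a-(-a)=2a,
\]
so $\mathrm{Im}(\mathrm{Id}_A-\tau)=2(A)$. Hence
\[
R(\tau)=\bigl|A/2(A)\bigr|,
\]
which under the assumption that $2(A)$ has finite index in $A$ is exactly the index $[A:2(A)]$.

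There is essentially no obstacle here: both statements are one-line applications of the quotient description of the Reidemeister number recalled just above the lemma. The only thing one should not skip is noting that $\tau$ is indeed an automorphism (its own inverse), so that the statement of part (2) makes sense as a claim about an automorphism.
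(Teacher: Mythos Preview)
Your proof is correct and essentially matches the paper's own argument; the only cosmetic difference is that for part~(1) the paper verifies directly from the definition of Reidemeister equivalence that every $a$ lies in the class of $0$ (taking $z=-a$ in $y=z+x-\varphi(z)$), whereas you invoke the quotient description $R(\varphi)=|A/\mathrm{Im}(\Id_A-\varphi)|$ for both parts. Since that description was recalled immediately before the lemma, your route is entirely legitimate and arguably more uniform.
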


\begin{proof} Part (1) follows straightforward from the definition of the Reidemeister classes since 
for any $a$ we have  $a= -a+ 0 - 2(-a)=a$ so $a$ is in the same Reidemeister class as
   $0$ for any $a$. The second part follows from the fact that for abelian groups  the Reidemeister classes for $\tau$  correspond  with the cosets of the image of the homomorphism $\Id-\tau:A\to A: a \mapsto a-(-a)=2a$.
\end{proof}

\begin{corollary}\label{corR1} If multiplication by $2$ is an automorphism, then not only does $A$ not have the $R_{\infty}$ property, but also  $A$ admits  automorphisms, which are multiplication by $-1$ and multiplication by $2$,  which have Reidemeister number $1$.
\end{corollary}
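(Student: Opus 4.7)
The plan is to read off both conclusions directly from Lemma \ref{ind}, using that the hypothesis ``multiplication by $2$ is an automorphism'' is equivalent to the equality $2(A)=A$, i.e.\ to $[A:2(A)]=1$.

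First I would invoke Lemma \ref{ind}(1). That part asserts unconditionally that the homomorphism $2\colon A\to A$ has Reidemeister number $1$. Under our additional hypothesis, this homomorphism is in fact an automorphism, and so it already witnesses that $A$ admits an automorphism with $R(\varphi)=1$; in particular $A$ fails to have the $R_\infty$ property.

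Next I would apply Lemma \ref{ind}(2) to the automorphism $\tau\colon A\to A,\ x\mapsto -x$. Since $2(A)=A$ has index $1$ in $A$ (a finite index, so the hypothesis of part (2) is satisfied), the lemma gives $R(\tau)=[A:2(A)]=1$. This exhibits multiplication by $-1$ as the second automorphism claimed in the statement.

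The main obstacle is essentially nonexistent: the corollary is just the specialization of Lemma \ref{ind} to the case $2(A)=A$, and the only thing to observe is that in this specialization part (1) yields an automorphism (not merely a homomorphism) with $R=1$, while part (2) yields index $1$. I would therefore present the proof as a two-line deduction from Lemma \ref{ind}.
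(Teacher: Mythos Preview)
Your proof is correct and follows exactly the same approach as the paper, which simply records that the corollary ``follows promptly from the lemma above.'' You have merely spelled out the two-line deduction from Lemma~\ref{ind} explicitly.
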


\begin{proof} Follows promptly from the lemma above.
\end{proof}

\medskip

In the rest of this paper we will also need the following lemma. 
\begin{lemma}\label{indexm} Let ${\mathcal P}$ be any set of primes and let $m>1$ be a positive integer whose prime decomposition only consists of primes in ${\mathcal P}$. 
Then, the index  $[\Z_{\mathcal P}:m \Z_{\mathcal P}]$ equals 
$m$. 
\end{lemma}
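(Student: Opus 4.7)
The plan is to exhibit an explicit set of $m$ coset representatives for $m\Z_{\mathcal P}$ in $\Z_{\mathcal P}$, namely the integers $0,1,\ldots,m-1$, and then verify that these representatives are pairwise inequivalent modulo $m\Z_{\mathcal P}$.

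First I would show that every element of $\Z_{\mathcal P}$ is congruent to an integer in $\{0,1,\ldots,m-1\}$ modulo $m\Z_{\mathcal P}$. Take an arbitrary $a/b\in \Z_{\mathcal P}$ written in lowest terms, so that $b$ is coprime to every prime in $\mathcal P$. Since by hypothesis every prime divisor of $m$ lies in $\mathcal P$, we have $\gcd(b,m)=1$. Hence there exists $c\in\Z$ with $bc\equiv 1\pmod m$, say $bc-1=km$ for some $k\in\Z$. Then
\[
\frac{a}{b}-ac \;=\; \frac{a(1-bc)}{b}\;=\;-\frac{akm}{b}\;=\;m\cdot\left(-\frac{ak}{b}\right),
\]
and since $b$ is coprime to all primes of $\mathcal P$, the factor $-ak/b$ lies in $\Z_{\mathcal P}$. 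Thus $a/b$ and the integer $ac$ represent the same coset, and reducing $ac$ modulo $m$ in the ordinary sense produces a representative in $\{0,1,\ldots,m-1\}$.

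Next I would check that these $m$ integers lie in pairwise distinct cosets. If $i,j\in\{0,1,\ldots,m-1\}$ satisfy $i-j\in m\Z_{\mathcal P}$, write $i-j=m(a/b)$ with $b$ coprime to every prime of $\mathcal P$. Clearing denominators gives $b(i-j)=ma$, and because $\gcd(b,m)=1$ we deduce $m\mid (i-j)$. Since $|i-j|<m$, this forces $i=j$.

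These two observations together give exactly $m$ cosets, hence $[\Z_{\mathcal P}:m\Z_{\mathcal P}]=m$. There is no real obstacle here; the only point requiring care is the reduction step, where the hypothesis that the prime divisors of $m$ all lie in $\mathcal P$ is essential in order to invoke $\gcd(b,m)=1$ and thus invert $b$ modulo $m$.
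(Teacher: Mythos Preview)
Your proof is correct and follows essentially the same approach as the paper's: both exhibit $\{0,1,\ldots,m-1\}$ as a complete set of coset representatives by exploiting $\gcd(b,m)=1$ via B\'ezout's identity (you invert $b$ modulo $m$, the paper writes the numerator as $\alpha r+\beta m$, which amounts to the same thing). Your version is in fact slightly more explicit in the distinctness step, where the paper simply states that the cosets are clearly different.
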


\begin{proof} It suffices to  show that any element of $\Z_{\mathcal P}$ belongs to exactly one of the cosets
\[ i + m \Z_{\mathcal P} \mbox{ with } i\in \{ 0,1,2,\ldots, m-1\}.\]
Let $x\in \Z_{\mathcal P}$. If $x=0$, then $x\in 0+ m \Z_{\mathcal P}$, otherwise 
$x=\ds\frac{q}{r}$ where $r$ is 1 or a product of primes not belonging to ${\mathcal P}$ and $q\in \Z$.
As $\gcd(m,r)=1$, it follows from  B\'ezout's identity that there exists integers $\alpha$ and $\beta$ with 
$q = \alpha r + \beta m$. Then
\[ x+ m  \Z_{\mathcal P}  = \frac{q}{r} + \Z_{\mathcal P} = \alpha + m \frac{\beta}{r} + m \Z_{\mathcal P}  = 
\alpha +m \Z_{\mathcal P} .\]
Now, write $\alpha = i + m \alpha'$ for some $i\in \{ 0,1,2,\ldots, m-1\}$ and $\alpha'\in \Z$. 
It follows immediately that 
\[ x\in x+ m \Z_{\mathcal P} = \alpha+ m \Z_{\mathcal P} = i  + m \Z_{\mathcal P}.\]
so that $x$ belongs to at least one of these cosets $i+m \Z_{\mathcal P}$. It is also easy to see that all of these cosets are different, which finishes the proof.
\end{proof}

\section{abelian groups which do not have the $R_{\infty}$ property}
 In this section we show that many abelian groups do not have the $R_{\infty}$ property and 
in some cases in fact we compute  the Reidemeister spectrum  (i.e.\ the set of all possible cardinals 
which are the Reidemeister number for  some automorphism of the group).  The calculation of the spectrum is useful for section 4.

\subsection{Divisible groups, the $p$-adic integers and torsion groups}

\begin{proposition}\label{notR} The following abelian groups do not 
have the $R_{ \infty}$ property:
\begin{enumerate}
\item  abelian divisible groups. 
\item The groups $\Z_{\mathcal P}$, where  
${\mathcal P}$ is any 
subset of the set of all primes. 
\item  The $p$-adic integers $\Z_p$ for any prime  $p$.
\item Any abelian torsion group without $2$-torsion elements.
\end{enumerate}
 \end{proposition}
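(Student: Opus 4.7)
My approach for all four parts is to invoke Lemma \ref{ind}(2) or Corollary \ref{corR1} through a uniform observation: since $-\Id_A$ is an automorphism of every abelian group $A$ and its Reidemeister number equals the index $[A:2(A)]$ whenever that index is finite, in the first three cases it suffices to check that $[A:2(A)]\in \{1,2\}$.

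For (1), if $A$ is divisible then every element is of the form $2b$, so $2(A)=A$ and $R(-\Id_A)=1$. For (2), Lemma \ref{indexm} handles the case $2\in {\mathcal P}$, giving $[\Z_{\mathcal P}:2\Z_{\mathcal P}]=2$; when $2\notin {\mathcal P}$, the element $1/2$ lies in $\Z_{\mathcal P}$, so $2\Z_{\mathcal P}=\Z_{\mathcal P}$. For (3), if $p$ is odd then $2$ is a unit in $\Z_p$ and hence $2\Z_p=\Z_p$, while if $p=2$ then $[\Z_2:2\Z_2]=2$ because $2\Z_2$ is the maximal ideal with quotient $\Z/2\Z$.

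Part (4) is handled slightly differently, since one has no a priori handle on $[A:2(A)]$. Instead I would show that multiplication by $2$ is itself an automorphism and then conclude via Corollary \ref{corR1}. Injectivity is immediate from the absence of $2$-torsion. For surjectivity, the key observation is that the combined hypothesis forces every element to have odd order: any $x$ of even order $2m$ would yield a nontrivial $2$-torsion element $mx$. Given $x$ of odd order $n$, Bezout's identity applied to $\gcd(2,n)=1$ produces integers $a,b$ with $2a+nb=1$, so $x=(2a+nb)x=2(ax)\in 2(A)$.

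No single step is a serious obstacle; the mild subtlety, which appears in (4), is the realization that ``no $2$-torsion'' combined with being a torsion group in fact upgrades to ``every element has odd order'', from which divisibility by $2$ follows immediately.
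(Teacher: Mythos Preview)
Your proof is correct and follows essentially the same route as the paper: in each case you exhibit $-\Id_A$ (equivalently, multiplication by $2$) as an automorphism with finite Reidemeister number by checking that $[A:2(A)]\le 2$, invoking Lemma~\ref{ind}(2), Corollary~\ref{corR1}, and Lemma~\ref{indexm} exactly as the paper does. The only difference is that you spell out the verification for part~(4)---that every element has odd order and hence multiplication by $2$ is bijective---whereas the paper simply cites Corollary~\ref{corR1} and leaves this check implicit.
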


\begin{proof} Part (1) follows promptly from Lemma \ref{ind} item (2). \\
 Part (2) follows from Corollary~\ref{corR1} if
$2\not \in  {\mathcal P}$ and from Lemma \ref{ind} item (2) and Lemma \ref{indexm} otherwise. \\
 Part (3) follows from Corollary~\ref{corR1} if
$p$ is odd  and from Lemma \ref{ind} item (2) for $p=2$. \\
Part (4) follows from Corollary~\ref{corR1}.
\end{proof}

Note that in case $\mathcal{P}$ is the set of all primes, then  the group $\Z_{\mathcal P}$ is exactly the 
group $\Z$, which certainly does not have the $R_{\infty}$ property, but this group is finitely generated.

\medskip

For divisible groups we can even say more:

\begin{proposition}
Let $A$ be a divisible abelian group and $\varphi:A\rightarrow A$ be any homomorphism. If $R(\varphi)$ is finite,
then $R(\varphi)=1$.
\end{proposition}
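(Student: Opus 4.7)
The plan is to use the formula, recalled earlier in the paper, that for an abelian group $A$ and any endomorphism $\varphi$ of $A$ one has
\[ R(\varphi) = \bigl|\, A/\mathrm{Im}(\mathrm{Id}_A - \varphi)\,\bigr|.\]
So proving the proposition reduces to showing that whenever $A$ is divisible and this quotient is finite, it must actually be trivial.

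The key observation I would use is the standard fact that every quotient of a divisible abelian group is again divisible: if $H \leq A$ and $A$ is divisible, then for $x+H \in A/H$ and $n\geq 1$ we can pick $y\in A$ with $ny=x$, so $n(y+H)=x+H$. Applying this with $H = \mathrm{Im}(\mathrm{Id}_A - \varphi)$, the quotient group $A/\mathrm{Im}(\mathrm{Id}_A - \varphi)$ is a divisible abelian group of cardinality $R(\varphi)$.

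The remaining step is to note that a finite divisible abelian group $D$ is necessarily trivial: taking $n=|D|$, multiplication by $n$ on $D$ is the zero map, but divisibility forces this map to be surjective, forcing $D=0$. Combining these observations, if $R(\varphi)$ is finite then $A/\mathrm{Im}(\mathrm{Id}_A-\varphi)$ is a finite divisible abelian group, hence trivial, so $R(\varphi)=1$.

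I do not really anticipate an obstacle here; the whole argument is essentially a two-line appeal to the structure theory of divisible groups recalled in Section~2, together with the quotient-group interpretation of $R(\varphi)$ already used in the proof of Lemma \ref{ind}. The only thing worth being slightly careful about is to apply the formula to the homomorphism $\mathrm{Id}_A - \varphi$ (rather than $\varphi - \mathrm{Id}_A$, though either works since their images coincide), and to state the triviality of finite divisible groups explicitly, since the paper has so far only invoked divisibility qualitatively.
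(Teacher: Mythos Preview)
Your proof is correct and follows essentially the same approach as the paper: both use the quotient interpretation of $R(\varphi)$ and the fact that a divisible abelian group has no proper subgroup of finite index. The paper simply asserts this last fact directly, whereas you break it into the two steps ``quotients of divisible groups are divisible'' and ``finite divisible groups are trivial'', but the underlying argument is the same.
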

\begin{proof}
If $R(\varphi)$ is finite, then the group ${\rm Im}(\varphi - \Id_A)$ is a subgroup of finite index in $A$. However, the only 
subgroup of finite index in a divisible group $A$ is the group $A$ itself. Therefore $A= {\rm Im}(\varphi - \Id_A)$ and
hence $R(\varphi)=1$.
\end{proof}

In fact, divisible groups can be totally ignored when studying the $R_\infty$ property of abelian groups. We make this precise in the following proposition.

\begin{proposition}\label{reduced part}
Let $A$ be an abelian group. Then $A$ has the $R_{\infty}$ property if and only if the reduced part of $A$ has the $R_{\infty}$ property. 
\end{proposition}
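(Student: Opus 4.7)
The plan is to use Theorem \ref{div} to write $A = M \oplus N$, where $M$ is the maximal divisible subgroup and $N$ is isomorphic to the reduced part $A/M$. The crucial preliminary observation is that $M$ is characteristic in $A$: any automorphism $\varphi$ of $A$ sends the divisible subgroup $M$ to a divisible subgroup of $A$, which must lie inside $M$ by maximality; the same argument applied to $\varphi^{-1}$ gives the reverse inclusion, so $\varphi(M) = M$. Consequently, every automorphism of $A$ restricts to an automorphism of $M$ and descends to an automorphism $\bar\varphi$ of $A/M \cong N$.

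For the implication that $A$ having $R_\infty$ forces $N$ to have $R_\infty$, I would pick an arbitrary $\bar\varphi \in \Aut(N)$ and extend it to $\varphi = (-\Id_M) \oplus \bar\varphi$ on $A = M \oplus N$; this is an automorphism because $-\Id_M$ is its own inverse. A direct calculation gives $\varphi - \Id_A = (-2\Id_M) \oplus (\bar\varphi - \Id_N)$. Because $M$ is divisible, multiplication by $-2$ is surjective on $M$, so ${\rm Im}(\varphi - \Id_A) = M \oplus {\rm Im}(\bar\varphi - \Id_N)$ inside $M \oplus N$, and passing to the quotient yields $R(\varphi) = R(\bar\varphi)$. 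Since $A$ has $R_\infty$, the left-hand side is infinite, forcing $R(\bar\varphi) = \infty$.

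For the reverse implication, I would take any $\varphi \in \Aut(A)$ and apply the snake lemma to the commutative diagram whose rows are the short exact sequence $0 \to M \to A \to A/M \to 0$ and whose vertical arrows are $\psi|_M$, $\psi$, $\bar\psi$, where $\psi = \varphi - \Id_A$ and $\bar\psi = \bar\varphi - \Id_{A/M}$. The snake lemma produces a surjection ${\rm coker}(\psi) \twoheadrightarrow {\rm coker}(\bar\psi)$, hence $R(\varphi) \geq R(\bar\varphi)$ as cardinals. Because $N \cong A/M$ has $R_\infty$ by hypothesis, $R(\bar\varphi)$ is infinite, so $R(\varphi)$ is infinite as well.

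I expect the main obstacle to be producing, in the forward direction, an extension whose Reidemeister number can be computed exactly rather than merely bounded. The choice $\mu = -\Id_M$ succeeds precisely because divisibility of $M$ turns $\mu - \Id_M = -2\Id_M$ into a surjection on $M$, echoing the reasoning used in Lemma \ref{ind} and in the proof of Proposition \ref{notR}(1). Once this observation is in hand, both directions drop out cleanly.
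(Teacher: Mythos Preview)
Your proposal is correct and follows essentially the same approach as the paper: the extension $\varphi=(-\Id_M)\oplus\bar\varphi$ with the computation ${\rm Im}(\varphi-\Id_A)=M\oplus{\rm Im}(\bar\varphi-\Id_N)$ is exactly the paper's argument for one direction, and your snake-lemma surjection ${\rm coker}(\psi)\twoheadrightarrow{\rm coker}(\bar\psi)$ just makes explicit what the paper dismisses as ``easy to see'' for the other. The only addition is that you spell out why $M$ is characteristic, a point the paper takes for granted.
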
 

\begin{proof}
Let $M$ be the unique maximal divisible subgroup of $A$, then $A=M\oplus N$ where $A/M\cong N$ is the reduced part of $A$. Let $\varphi$ be any automorphism of $A$, then $\varphi$ restricts to an automorphism $\varphi'$ of 
$M$ and induces an automorphism $\bar{\varphi}$ of the quotient $A/M$. 

\medskip

It is easy to see that when $R(\bar\varphi)$ is infinite, then also $R(\varphi)$ is infinite. Hence, if $A/M$ 
has the $R_{\infty}$ property, then also $A$ has the $R_\infty$ property.

\medskip

On the other hand, assume that $A$ has the $R_\infty$ property and consider any automorphism $\bar\varphi$ of 
$A/M$. We can lift this automorphism, to an automorphism $\varphi$ of $A$ by defining 
\[ \varphi: M\oplus N \rightarrow M\oplus N: (m,n) \mapsto (-m, \bar\varphi(n)).\]
Recall that  $R(\varphi)$ equals the index of Im$(\Id_A - \varphi)$ in $A$. Since 
\[ {\rm Im}(\Id_A - \varphi)= 2M \oplus {\rm Im}(\Id_N - \bar\varphi) =  M \oplus {\rm Im}(\Id_N - \bar\varphi)\]
we have that $R(\varphi)=R(\bar\varphi)$ and so this Reidemeister number is infinite, since $A$ has the 
$R_\infty$ property. Hence $A/M$ also has this property.
\end{proof}

It follows that, from the point of view of the $R_\infty$ property, we are left to the study 
of reduced abelian groups.

\medskip

Also in the case the groups are torsion, it suffices to study the $2-$torsion groups. Indeed,
any abelian torsion group $A$ can be decomposed as a direct sum $\ds A=\bigoplus_{p\;{\rm prime}} A_p$, where 
$A_p$ is the $p$-primary part of $A$, i.e.\ the subgroup of $A$ all elements of $p$--power order (\cite[Theorem~1]{ka}).
As all of these subgroups $A_p$ are characteristic in $A$, we have that $\Aut(A)=\ds \prod_{p\;{\rm prime}} \Aut(A_p)$. 
Since for any $p\neq2$ there is an automorphism $\varphi_p\in \Aut(A_p)$ with Reidemeister number $R(\varphi_p)=1$ (e.g.\ $\varphi_p$ is multiplication with 2), 
it follows that $A$ has property $R_\infty$ if and only if $A_2$ has property $R_\infty$.

\medskip

{\bf Remark:} We do not know an example of an abelian $2-$torsion group which has the $R_{\infty}$ property. 

\medskip

In section~\ref{torsion} we continue our study of torsion groups.

\subsection{Direct sum and product of any abelian group}

\begin{proposition}\label{many factors} If $A$ is an arbitrary abelian group, then for any finite integer $n>1$,
there is an automorphism $\varphi: A^n \to A^n$, 
that has Reidemeister number 1.\\
Furthermore, in the case $\alpha$ is an infinite cardinal,  the same result holds  for both  the direct sum $\ds \bigoplus_\alpha A$ (weak direct product) and  the direct product $\ds \prod_\alpha A$.\\ 
Hence none of these groups has the $R_\infty$ property.
 \end{proposition}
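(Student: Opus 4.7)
The plan is to reduce everything to a single $2\times 2$ building block and then propagate it through direct sums and products. First I would verify the case $n=2$ by hand, using the integer matrix
$$M=\begin{pmatrix} 1 & 1 \\ 1 & 0 \end{pmatrix}\in\Gl_2(\Z),$$
whose determinant is $-1$. The key observation is that $I-M=\begin{pmatrix} 0 & -1 \\ -1 & 1 \end{pmatrix}$ also has determinant $-1$, so it lies in $\Gl_2(\Z)$ as well. The matrix $M$ induces an automorphism $\varphi\colon A^2\to A^2$, $(a,b)\mapsto(a+b,a)$, and $\Id-\varphi$ is exactly the automorphism induced by $I-M$. In particular $\Id-\varphi$ is surjective, and combining this with the identity $R(\varphi)=[A^2:{\rm Im}(\Id-\varphi)]$ recalled at the start of Section~2 gives $R(\varphi)=1$.

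For general $n\geq 2$ I would take the block-diagonal automorphism of $A^n$ built from $\lfloor n/2\rfloor$ copies of the $2\times 2$ block above, inserting, when $n$ is odd, one additional block $M'$ for which both $M'$ and $I-M'$ are unimodular; for example, the companion matrix of $x^3+x-1$ has determinant $1$ and satisfies $\det(I-M')=\chi_{M'}(1)=1$, so it handles any odd-sized leftover. The resulting $\Id-\varphi$ is block-diagonal with each block invertible over $\Z$, hence is an automorphism of $A^n$, and $R(\varphi)=1$ follows as before.

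For an infinite cardinal $\alpha$ I would use $\alpha+\alpha=\alpha$ to partition the index set into $2$-element subsets and apply the $2\times 2$ automorphism on each pair. This simultaneously defines an automorphism $\varphi$ of $\bigoplus_\alpha A$ and of $\prod_\alpha A$, and $\Id-\varphi$ decomposes as a direct sum (respectively direct product) of surjective copies of the map $\Id-\varphi|_{A^2}$. Each pair can be inverted independently, so $\Id-\varphi$ is surjective on $\prod_\alpha A$; for $\bigoplus_\alpha A$ the preimage of a finitely supported element is automatically finitely supported, since only pairs meeting the support of the target contribute. In both cases $R(\varphi)=1$.

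The main, and only mildly non-trivial, obstacle is exhibiting the integer matrix $M$ with both $M$ and $I-M$ in $\Gl_n(\Z)$; once that is in hand the rest is a bookkeeping verification that the matrix-level invertibility transfers cleanly to $A^n$, to the direct sum, and to the direct product.
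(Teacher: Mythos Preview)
Your proposal is correct and follows essentially the same route as the paper: exhibit explicit matrices in $\Gl_2(\Z)$ and $\Gl_3(\Z)$ for which $I-M$ is again unimodular, assemble the general finite case block-diagonally, and handle infinite $\alpha$ via $\alpha+\alpha=\alpha$ by applying the $2\times 2$ block on each pair. The only differences are cosmetic---you use $M=\begin{pmatrix}1&1\\1&0\end{pmatrix}$ and the companion matrix of $x^3+x-1$, while the paper writes down different specific matrices, and you spell out a bit more carefully why surjectivity passes to $\bigoplus_\alpha A$ and $\prod_\alpha A$.
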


\begin{proof}  If $n$ is either $2$ or $3$, it is easy to find an  element $\theta_n\in \Gl(n,\Z)$  such that 
$det(\theta_n-\Id_n)=1$, i.e.\ it has Reidemeister number $1$. E.g.\ we can take 
\[ \theta_2 = \begin{pmatrix} 1 & 1 \\ -1 & 0\end{pmatrix} \mbox{ and }\theta_3=
\begin{pmatrix}
0 & -1 & 0 \\ 1 & 0 & 1 \\ 0 & 1 & 1
\end{pmatrix}. \]
Then given an arbitrary 
integer $n>1$, using the result for $n=2$ and $n=3$,  we can construct a blocked diagonal element 
$\theta_n\in \Gl(n,\Z)$  such that 
$det(\theta_n-\Id_n)=1$, i.e.\ it has Reidemeister number $1$.  Now we use this matrix, in the obvious way,
  to define an automorphism $\varphi$ of $A^n$. Then the homomorphisms $\varphi-\Id$ is surjective and 
then we have $R(\varphi)=1$. 

\medskip

Now let $\alpha$ be an infinite cardinal. Then $\alpha=\alpha+\alpha$ and hence 
$\ds\prod_{\alpha} A\cong\prod_{\alpha}(A\oplus A)$.
On any factor $A\oplus A$, we can then consider the automorphism $\psi$ which is given by the matrix 
$\theta_2$. Using this $\psi$, we define an automorphism $\varphi$ of $\ds \prod_\alpha(A\oplus A)$ 
which is given by
\[ \varphi=\prod_\alpha \psi: \prod_\alpha(A\oplus A) \rightarrow \prod_\alpha(A\oplus A): (a_j,b_j)_{j\in \alpha} \mapsto 
\psi (a_j,b_j)_{j\in \alpha} .\]
Again the homomorphism $\varphi-\Id$ is surjective 
and the result follows for the direct product. \\
The case of the direct sum is completely analogous.
 \end{proof}

\subsection{The subgroups of the rationals}
Now we compute for any prime $p$ the spectrum of the group $\Z_{\hat{p}}$,  the integers localized at the  set of primes $\hat{p}$. As already mentioned before, this is 
the set of fractions where the denominators are powers of $p$. 
We already saw that these groups do not have the $R_\infty$ property (Proposition \ref{notR}),  but the calculation of the spectrum will be useful for section 4.

\begin{proposition}\label{spect}  
The spectrum of $\Z_{\hat{p}}$ is 
\[ \{2\} \cup \{ p^m+1 \;|\; m \in \N\} \cup \{ p^m-1 \;| \;m \in \N\} \cup\{  \infty\},\]
in case $p\neq 2$ and is 
\[  \{ 2^m+1 \;|\; m \in \N\} \cup \{ 2^m-1 \;| \;m \in \N\} \cup\{  \infty\},\]
for $p=2$.\\
Hence if   $\varphi: \Z_{\hat{p}} \to \Z_{\hat{p}}$ is an automorphism and $p\ne 2$, then $R(\varphi)\ne 1$.
\end{proposition}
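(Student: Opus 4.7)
The plan is to reduce the problem to an analysis of the automorphism group of $\Z_{\hat p}$, and then to use Lemma~\ref{indexm} to compute the index of ${\rm Im}(\varphi - \Id)$ for each such automorphism.

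First I would identify all endomorphisms of $\Z_{\hat p}$. The group $\Z_{\hat p}$ is a torsion-free subgroup of $\Q$ containing $1$, so any endomorphism $\varphi$ is determined by the image $a=\varphi(1)\in\Z_{\hat p}$, and conversely multiplication by any $a\in\Z_{\hat p}$ is a well-defined endomorphism. Hence ${\rm End}(\Z_{\hat p})\cong \Z_{\hat p}$ as a ring, and $\Aut(\Z_{\hat p})$ is the group of units of this ring. Since every element of $\Z_{\hat p}$ can be written uniquely (up to sign) as $\pm q/p^n$ with $\gcd(q,p)=1$ and $q\geq 1$, the units are precisely $\pm p^k$ for $k\in \Z$; note also that $p$ itself is a unit, so $p\,\Z_{\hat p}=\Z_{\hat p}$.

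Next, for each such automorphism $\varphi_a$ (multiplication by $a$), I would compute $R(\varphi_a)=[\Z_{\hat p}:(a-1)\Z_{\hat p}]$. Writing any integer $n=p^s m$ with $\gcd(m,p)=1$, one gets $n\,\Z_{\hat p}=m\,\Z_{\hat p}$ because $p^s$ is a unit, and then Lemma~\ref{indexm} (applied to $\mathcal{P}=\hat p$) gives $[\Z_{\hat p}:m\,\Z_{\hat p}]=|m|$. I would run through the cases:
\begin{itemize}
\item $a=1$ gives $\varphi_a-\Id=0$, hence $R=\infty$.
\item $a=p^k$ with $k\geq 1$: then $a-1=p^k-1$ is coprime with $p$, so $R=p^k-1$.
\item $a=p^{-j}$ with $j\geq 1$: then $a-1=-(p^j-1)/p^j$, and clearing the unit $p^j$ gives $R=p^j-1$.
\item $a=-1$: then $a-1=-2$, which for $p$ odd gives $R=2$, while for $p=2$ the factor $2$ is a unit and so $R=1$.
\item $a=-p^k$ with $k\geq 1$: then $a-1=-(p^k+1)$, and since $\gcd(p^k+1,p)=1$ we get $R=p^k+1$.
\item $a=-p^{-j}$ with $j\geq 1$: analogous to the previous cases, $R=p^j+1$.
\end{itemize}
Taking the union of all these values yields exactly the two sets stated in the proposition. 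The last claim that $R(\varphi)\ne 1$ when $p$ is odd is then immediate, since in that case the smallest value $p^m\pm 1$ with $m\geq 1$ is at least $p-1\geq 2$, and the isolated value coming from $\varphi_{-1}$ is also $2$.

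The only place that needs care is the identification of units and the correct bookkeeping when $a$ is a negative power of $p$, where one must absorb the denominator into the unit $p^j$ before applying Lemma~\ref{indexm}; once this is done systematically, the whole computation is essentially mechanical. The genuine conceptual point is the case $p=2$, which differs because $2$ itself is a unit of $\Z_{\hat 2}$, causing the isolated element $\{2\}$ in the spectrum to disappear and allowing $R(\varphi)=1$ (obtained from $\varphi=-\Id$).
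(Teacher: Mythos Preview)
Your proposal is correct and follows essentially the same route as the paper: identify $\Aut(\Z_{\hat p})$ as multiplication by $\pm p^m$, then compute the index of $(\varphi-\Id)(\Z_{\hat p})$ via Lemma~\ref{indexm}. The only cosmetic difference is that the paper invokes $R(\varphi)=R(\varphi^{-1})$ to reduce to nonnegative exponents, whereas you treat the cases $a=\pm p^{-j}$ directly by absorbing the unit $p^j$; both arrive at the same list of Reidemeister numbers.
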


\begin{proof}
Let us consider an automorphism $\varphi$ of the group  $\Z_{\hat{p}}$. Then $\ds\varphi(1) = \frac{a}{p^n}$ for some integers $a$ and $n$ and $\varphi$ is  just multiplication with $\ds\frac{a}{p^n}$. 
Since $\varphi$ is an automorphism, $1$ must be in the image of $\varphi$, and hence there must exist a 
$\ds \frac{b}{p^k}\in \Z_{\hat{p}}$ such that $\ds \varphi(\frac{b}{p^k}) = 
\frac{b}{p^k}\frac{a}{p^n}= 1$. Hence, the only prime which possibly divides $a$ is the prime $p$. 
It follows that $\varphi(1) = \pm p^m$ for some integer $m$. Since $R(\varphi) = R(\varphi^{-1})$, we may assume that 
$m\geq 0$. Recall that $R(\varphi)$ is the index of  Im($\varphi - \Id$) in $\Z_{\hat{p}}$. We distinguish four cases:
\begin{itemize}
\item $\varphi(1)=1 $ (first case where $m=0$). In this case $\varphi-\Id$ is the zero homomorphism and $R(\varphi)=\infty$.
\item $\varphi(1)=-1 $ (second case where $m=0$). In this case $\varphi-\Id$ is multiplication by $-2$. 
For $p=2$, this is an automorphism of $\Z_{\hat{2}}$, which leads to $R(\varphi)=1=2^1-1$. When $p\neq 2$, it follows from 
Lemma~\ref{indexm} that $R(\varphi)=2$.
\item $\varphi(1)=p^m$ (with $m>0$). Now  $\varphi-\Id$ is multiplication with $p^m-1$ and since $p^m-1$ is  relative 
prime to $p$, Lemma~\ref{indexm} implies that $R(\varphi)= p^m-1$.
\item $\varphi(1)=-p^m$ (with $m>0$). Then Im$(\varphi-\Id)={\rm Im}(\Id - \varphi)$ and since $\Id - \varphi$ is the 
same as multiplication with $p^m+1$, Lemma~\ref{indexm} again implies that $R(\varphi)= p^m+1$.
\end{itemize} 
This finishes our computation of the spectrum.\\
The fact that in case $p\ne 2$, we always have that $R(\varphi)\ne 1$ is clear from  the first part.
\end{proof}

\subsection{Torsion groups and direct products}\label{torsion}

Among torsion groups we have the Pr\"ufer groups $\Z({p^{\infty}})$ where $p$ is any prime number.
From section 5, {\it Divisible groups}, in  \cite{ka} it follows that the Pr\"ufer groups are divisible so they do 
not have  the $R_{\infty}$ property by Proposition \ref{notR}.

\medskip

Another easy way of constructing torsion groups is to take direct sums of finite cyclic groups. 
This situation is completely dealt with in the next two propositions.

%THE FOLLOWING SENTENCE SHOULD BE WRITTEN SOMEWHERE ELSE.
%From Theorem 1 page 5  in \cite{ka} we have that 
 %it suffices to study the problem for the $p-$primary component of the group for all primes $p$. 
 
\begin{proposition}
Let $n_1,n_2,n_3,n_4,\ldots $ be an increasing set of positive integers. Then 
both 
\[ \bigoplus_{i\in \N}  \Z/2^{n_i} \Z \mbox{ \ \ and \ \ } \prod _{i\in \N}  \Z/2^{n_i} \Z\]
admit an automorphism with Reidemeister number equal to 1.\\
In particular these groups and also the torsion subgroup of $\ds  \prod _{i\in \N}  \Z/2^{n_i} \Z$ do not have 
the $R_\infty$ property.
\end{proposition}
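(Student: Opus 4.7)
My plan is to construct, in each case, an automorphism $\varphi$ whose difference $\varphi-\Id$ is surjective; in the abelian setting this forces $R(\varphi)=[G:\mathrm{Im}(\varphi-\Id)]=1$.

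For the direct sum $\bigoplus_{i\in\N} \Z/2^{n_i}\Z$, let $f_i \colon \Z/2^{n_{i+1}}\Z \to \Z/2^{n_i}\Z$ denote the canonical surjection given by reduction modulo $2^{n_i}$, which is well-defined because $n_i<n_{i+1}$. I would try the ``shift-plus-identity'' formula $\varphi(x)_i = x_i + f_i(x_{i+1})$. Then $\varphi-\Id$ sends $x$ to the sequence $(f_i(x_{i+1}))_i$, and is surjective because each $f_i$ is surjective and one can pick coordinate lifts independently within the finite support of the target. Finite support is also what makes $\varphi$ itself bijective on the direct sum: if $\varphi(x)=0$ and $k=\max\{i:x_i\neq 0\}$, then the $k$-th equation with $x_{k+1}=0$ forces $x_k=-f_k(0)=0$, a contradiction; surjectivity follows by back-substitution starting from zero tails beyond the support of the target.

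For the direct product $\prod_{i\in\N} \Z/2^{n_i}\Z$ the same formula only gives an endomorphism: starting from any nonzero $x_1$ and inductively lifting $x_{i+1}$ to satisfy $f_i(x_{i+1})=-x_i$ produces a nontrivial kernel element. The natural repair is to couple the backward shift with a forward shift: with $s_i \colon \Z/2^{n_i}\Z \to \Z/2^{n_{i+1}}\Z$ denoting multiplication by $2^{n_{i+1}-n_i}$, I would try a ``tridiagonal'' formula of the form
\[ \varphi(x)_i = x_i + f_i(x_{i+1}) + s_{i-1}(x_{i-1}) \qquad (\text{with the convention } s_0=0), \]
possibly multiplying the correction term by an odd coefficient in $\Z/2^{n_i}\Z$. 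The intent is that the forward-shift term introduces enough consistency to destroy the backward-coherent kernels, while $\varphi-\Id$ still surjects because the $f_i$-contribution alone already surjects coordinate-wise. The same construction restricts to the torsion subgroup of the product since $f_i$ and $s_{i-1}$ both preserve bounded exponent.

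The main anticipated obstacle is the injectivity check on the direct product: unlike the direct sum there is no endpoint from which to run an inductive argument, so one must establish the absence of nontrivial kernel elements more globally, for instance by exhibiting an explicit inverse of $\varphi$ or by showing that the resulting recurrence on infinite sequences admits only the zero solution. If the plain two-sided formula still fails, I expect that a careful tuning of the scalar coefficients in front of $s_{i-1}(x_{i-1})$ (using invertibility of odd elements modulo $2^{n_i}$) will produce a rigid system with trivial kernel while preserving surjectivity of $\varphi-\Id$.
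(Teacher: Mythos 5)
Your treatment of the direct sum is fine: $\varphi(x)_i=x_i+f_i(x_{i+1})$ is indeed an automorphism of $\bigoplus_{i\in\N}\Z/2^{n_i}\Z$ (injectivity via the top coordinate of the support, surjectivity by back-substitution from the end of the support of the target), and $\varphi-\Id$ is onto, so $R(\varphi)=1$. This is a legitimate, slightly simpler alternative to what the paper does for the direct sum.

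The direct product case, however, is not proved, and the proposed ``tridiagonal'' repair cannot be made to work. Consider $\varphi(x)_i=x_i+f_i(x_{i+1})+c_i\,s_{i-1}(x_{i-1})$ with any scalars $c_i$ and suppose $\varphi(x)=0$. The first equation gives $x_1=-f_1(x_2)$; substituting into the second and using $s_1\circ f_1=$ multiplication by $2^{n_2-n_1}$ on $\Z/2^{n_2}\Z$ yields $(1-c_2 2^{n_2-n_1})x_2=-f_2(x_3)$, and since $n_2>n_1$ the coefficient is odd, hence a unit; so $x_2=-u_2f_2(x_3)$ for a unit $u_2$. Inductively one gets $x_i=-u_if_i(x_{i+1})$ with every $u_i$ a unit: after elimination, the forward-shift term only perturbs the diagonal coefficient by a unit times a positive power of $2$, so it can never break the backward recursion. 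The maps $a\mapsto -u_if_i(a)$ are surjective, so the kernel is in bijection with the inverse limit of the groups $\Z/2^{n_i}\Z$ along surjections, which is nontrivial (uncountable when $n_i\to\infty$). Hence no tuning of the coefficients makes $\varphi$ injective, and the torsion-subgroup assertion inherits the same gap (there, in addition, surjectivity of $\varphi-\Id$ onto the torsion subgroup would need a separate argument, since preimages produced by coordinatewise lifting need not have bounded exponent). The paper escapes this by a genuinely different device: it groups the coordinates into consecutive pairs and sets $\varphi(a)_{2k-1}=a_{2k-1}+a_{2k}+a_{2k+1}$ and $\varphi(a)_{2k}=a_{2k}+a_{2k+1}$. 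This $\varphi$ has an explicit two-sided inverse, namely $\psi(a)_{2k-1}=a_{2k-1}-a_{2k}$, $\psi(a)_{2k}=a_{2k}-a_{2k+1}+a_{2k+2}$, so invertibility requires no limiting argument; and $(\varphi-\Id)(a)_{2k-1}=a_{2k}+a_{2k+1}$, $(\varphi-\Id)(a)_{2k}=a_{2k+1}$ is surjective because each unknown $a_j$ with $j\geq 2$ is subject to exactly one reduction condition, i.e.\ the congruences decouple into independent finite blocks instead of forming an infinite regress. Some idea of this kind --- an explicit global inverse together with a decoupled surjectivity argument --- is exactly what your proposal is missing.
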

\begin{proof}
We begin this proof with the case of the direct product.\\
A general element of $A=\ds \prod _{i\in \N}  \Z/2^{n_i} \Z$ can be written in the form 
\[ (a_1 + 2^{n_1} \Z, a_2 + 2^{n_2} \Z, a_3 + 2^{n_3} \Z, a_4+2^{n_4} \Z.\ldots).\]
for some integers $a_1,a_2,a_3,a_4,\ldots$. For simplicity we will write this shortly as 
\[ ( \overline{a_1}, \overline{a_2}, \overline{a_3}, \overline{a_4},\ldots )\]
Now define $\varphi:A\rightarrow A$ by 
\[ ( \overline{a_1}, \overline{a_2}, \overline{a_3}, \overline{a_4},\ldots )\mapsto
(\overline{a_1+a_2+a_3}, \overline{a_2+a_3}, \overline{a_3+a_4+a_5}, \overline{a_4+a_5},
\overline{a_5+a_6+a_7}, \overline{a_6+a_7},\ldots )\]
So the $(2 k-1)$-th component of this image is $\overline{ a_{2k-1} + a_{2k} + a_{2k+1}}$ and the 
$2k$-th component is $\overline{a_{2k}+a_{2k+1}}$.\\
As by assumption $n_1\leq n_2\leq n_3 \leq n_4 \leq \cdots$, the map $\varphi$ is well defined and is 
an endomorphism of $A$. 
In fact, $\varphi$ is an automorphism, since it is easy to check that the map $\psi:A\rightarrow A$:
\[ ( \overline{a_1}, \overline{a_2}, \overline{a_3}, \overline{a_4},\ldots )\mapsto
(\overline{a_1-a_2}, \overline{a_2-a_3+a_4}, \overline{a_3-a_4}, \overline{a_4-a_5+a_6},
\overline{a_5-a_6}, \overline{a_6-a_7+a_8},\ldots )\]
is also well defined and is an endomorphism which is the inverse of $\varphi$. Moreover the map $\varphi- \Id:A\rightarrow A$ is given by 
\[ ( \overline{a_1}, \overline{a_2}, \overline{a_3}, \overline{a_4},\ldots )\mapsto
(\overline{a_2+a_3}, \overline{a_3}, \overline{a_4+a_5}, \overline{a_5},
\overline{a_6+a_7}, \overline{a_7},\ldots )\]
which is clearly surjectice. Hence $R(\varphi)=1$.

\medskip

It is clear that one can use the restriction of $\varphi$ to the direct sum or the torsion subgroup of  
$A$ to obtain the same result in these cases.
\end{proof}
 
\begin{proposition} Let $A$ be any direct sum or any direct product of finite cyclic groups.
Then $A$ does not  have the  $R_{\infty}$ property.
\end{proposition}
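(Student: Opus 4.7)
My plan is to reduce the problem to the $2$-primary part of $A$ and then decompose that part according to the multiplicities of its cyclic summands. Since $\Z/n\Z \cong \bigoplus_p \Z/p^{v_p(n)}\Z$ is a finite direct sum, one obtains $A \cong \bigoplus_p A_p$ in the direct sum case and $A \cong \prod_p A_p$ in the direct product case, where each $A_p$ is a direct sum (resp.\ direct product) of finite cyclic $p$-groups. For every odd prime $p$, multiplication by $2$ is an automorphism of $A_p$, and by Lemma~\ref{ind}(1) it has Reidemeister number $1$. Defining $\varphi_p$ to be multiplication by $2$ on $A_p$ for each odd $p$ and leaving $\varphi_2 \in \Aut(A_2)$ to be specified, the combined automorphism $\varphi$ of $A$ (the direct sum, resp.\ direct product, of the $\varphi_p$) satisfies $R(\varphi) = R(\varphi_2)$, since ${\rm Im}(\varphi - \Id)$ decomposes componentwise and equals all of $A_p$ for every odd $p$. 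It therefore suffices to construct $\varphi_2 \in \Aut(A_2)$ of finite Reidemeister number.

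Grouping the cyclic $2$-summands of $A_2$ by exponent, one writes $A_2$ as a direct sum (resp.\ direct product) of groups $(\Z/2^e\Z)^{\lambda_e}$ for $e \geq 1$, where $(\Z/2^e\Z)^{\lambda_e}$ denotes a direct sum or direct product of $\lambda_e$ copies of $\Z/2^e\Z$, matching the ambient case. Let $E_\infty = \{e \geq 1 : \lambda_e \text{ infinite}\}$ and $E_f = \{e \geq 1 : \lambda_e \text{ finite and nonzero}\}$, and split $A_2 = B \oplus C$ correspondingly. For each $e \in E_\infty$, Proposition~\ref{many factors} furnishes an automorphism of $(\Z/2^e\Z)^{\lambda_e}$ of Reidemeister number $1$, and combining these factorwise gives $\psi_B \in \Aut(B)$ with $R(\psi_B) = 1$. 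For $C$: if $E_f$ is finite then $C$ is a finite group and $-\Id$ has Reidemeister number $[C : 2C] < \infty$; otherwise $E_f$ is countably infinite, and enumerating it as $e_1 < e_2 < \cdots$ and listing each factor $(\Z/2^{e_k}\Z)^{\lambda_{e_k}}$ as $\lambda_{e_k}$ consecutive copies of $\Z/2^{e_k}\Z$ presents $C$ as a countable direct sum (resp.\ direct product) of cyclic $2$-groups with non-decreasing exponents. The preceding proposition then supplies $\psi_C \in \Aut(C)$ with $R(\psi_C) = 1$.

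Setting $\varphi_2 = \psi_B \oplus \psi_C$, a componentwise cokernel computation yields $R(\varphi_2) = R(\psi_B) \cdot R(\psi_C) < \infty$, so $R(\varphi) < \infty$ and $A$ does not have the $R_\infty$ property. The main obstacle is handling $C$ when $E_f$ is infinite: then $C$ is an infinite direct sum (resp.\ direct product) of cyclic $2$-groups of varying orders, to which Proposition~\ref{many factors} does not directly apply; this is precisely the situation addressed by the preceding proposition, and one must verify that the non-decreasing enumeration extracted from the multiplicities $\lambda_e$ is compatible with both the direct sum and direct product settings. All remaining configurations reduce cleanly: to $A$ being finitely generated, to Proposition~\ref{many factors}, or to Proposition~\ref{notR}(4) when $A$ has no $2$-torsion.
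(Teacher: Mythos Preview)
Your argument is correct and follows essentially the same route as the paper: reduce to the $2$-primary part via multiplication by $2$ on the odd part, group the cyclic $2$-summands by exponent, apply Proposition~\ref{many factors} where the multiplicity allows, and handle the leftover piece with the preceding proposition on $\bigoplus/\prod \Z/2^{n_i}\Z$. The only cosmetic difference is that the paper splits by multiplicity $=1$ versus $>1$ (yielding a strictly increasing sequence of exponents in the residual piece), whereas you split by finite versus infinite multiplicity, so your residual sequence is merely non-decreasing---but the proof of the preceding proposition uses only $n_1\le n_2\le\cdots$, so this causes no trouble.
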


\begin{proof} 
We will give the proof of the fact that any direct product of finite cyclic groups does not have
the $R_\infty$  property. The case for the direct sum is completely analogous and is left to the reader.

As any finite cyclic group is the direct product of cyclic $p$--groups (for different primes $p$), we can assume 
that  
\[ A = \prod_{i\in I} A_i\]
for some index set $I$ and each $A_i$ is a cyclic group of prime-power  order. 
Now we divide $I$ into two disjoint subsets $I=I_1\cup I_2$, where 
\[ I_1=\{ i \in I \;|\; A_i \mbox{ is a 2--group }\}\mbox{ and }I_2= I\backslash I_1.\]
Let $A^{(1)}= \ds \prod_{i\in I_1} A_i$ and $A^{(2)}=\ds \prod_{i\in I_2} A_i$. Then 
$A= A^{(1)} \times A^{(2)}$. Note that multiplication by 2 is an automorphism, say $\varphi_2$, of  $A^{(2)}$
with Reidemeister number $R(\varphi_2)=1$. It is now enough to show that also $A^{(1)}$ admits an 
automorphism $\varphi_1$ with finite Reidemeister number, for then the automorpshim $\varphi=\varphi_1 \times 
\varphi_2$ will have Reidmeister number $R(\varphi)=R(\varphi_1)\times R(\varphi_2) = R(\varphi_1)$.  

So from now onwards we concentrate on $A^{(1)}$, and 
for any positive integer $n\in \N$ we let 
\[ I_1^{(n)} = \{ i \in I_1\; |\; \# A_i= 2^n \} \mbox{ \ \ and \ \ } A_n^{(1)} = \prod_{i \in I_1^{(n)}} A_i.\]
and so $A^{(1)}=\ds \prod_{n\in \N} A_n^{(1)} $. 
For those $n\in \N$ for which $\# I_1^{(n)}>1$, we know, by  Proposition~\ref{many factors}, that there exists an automorphism $\varphi_1^{(n)}$ of 
$A_n^{(1)} $ with Reidemeister number $R(\varphi_1^{(n)})=1$. 

Now, let $N\subseteq \N$ be the subset of positive integers $n$ such that $\# I_1^{(n)} =1$ and 
let $i_n$ denote the unique element in $I_1^{(n)}$. Then 
$\ds A^{(1)} = \prod_{n\in N} A_{i_n} \times \prod_{n \in\N\backslash N} A_n^{(1)}$.
As $\ds \prod_{n \in\N\backslash N} \varphi_1^{(n)}$ is an automorphism of $\ds  \prod_{n \in\N\backslash N} A_n^{(1)}$ with Reidmeister number 1, it suffices to find an automorphism of 
$\ds \prod_{n\in N} A_{i_n}$ of finite Reidemeister number. If $N$ is a finite set then any automorphism (e.g. the identity) will do. When $N$ is infinite, the result follows from the previous proposition.
\end{proof}

\section{abelian groups which have the $R_{\infty}$ property }
In this section we present for the first time an example of an abelian group which has the $R_{\infty}$ property. In fact we will show that there are at least an uncountable number of abelian groups with this property.

Recall that for any prime $p$, $\Z_{\hat{p}}$ is the subgroup of the rationals consisting of all fractions whose denominator is a power of $p$.

\begin{lemma} \label{hom zero}If $p_1\ne p_2$ (both primes) then $\Hom(\Z_{\hat p_1}, \Z_{\hat p_2})$ contains only one element,  which is the trivial homomorphism. 
 \end{lemma}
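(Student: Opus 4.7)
The plan is to show that any homomorphism $f: \Z_{\hat{p_1}} \to \Z_{\hat{p_2}}$ must be identically zero by first computing $f(1)$, and then extending to arbitrary elements using torsion-freeness.

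First I would pin down $f(1)$. Since $\frac{1}{p_1^n} \in \Z_{\hat{p_1}}$ for every $n \in \N$, we get
\[ f(1) = p_1^n \cdot f\Bigl(\frac{1}{p_1^n}\Bigr), \]
so $f(1) \in p_1^n \Z_{\hat{p_2}}$ for all $n$. Writing $f(1) = a/p_2^k$ with $a \in \Z$, divisibility by $p_1^n$ inside $\Z_{\hat{p_2}}$ translates (after clearing the $p_2$-denominator) into $p_1^n \mid a$ in $\Z$, using that $\gcd(p_1,p_2)=1$. Since this holds for every $n$, we must have $a=0$, whence $f(1)=0$.

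Next I would extend this to all of $\Z_{\hat{p_1}}$. Any element has the form $\frac{m}{p_1^n}$ with $m \in \Z$, $n \in \N$, and
\[ p_1^n \cdot f\Bigl(\frac{m}{p_1^n}\Bigr) = m \cdot f(1) = 0. \]
Since $\Z_{\hat{p_2}} \subseteq \Q$ is torsion-free, this forces $f(\frac{m}{p_1^n})=0$, and we conclude that $f$ is the trivial homomorphism.

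There is no real obstacle here; the only point requiring any care is the divisibility argument showing that $p_1^n$-divisibility in $\Z_{\hat{p_2}}$ implies $p_1^n$-divisibility of the numerator in $\Z$, which follows immediately from $\gcd(p_1,p_2)=1$.
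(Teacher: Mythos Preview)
Your proof is correct and follows essentially the same approach as the paper: show that $f(1)$ must be divisible by every power of $p_1$ in $\Z_{\hat p_2}$, conclude $f(1)=0$, and then deduce $f\equiv 0$. The paper phrases the last step as ``$\varphi$ is determined completely by $\varphi(1)$'', which is exactly your torsion-freeness argument unpacked; your version is simply more explicit in both halves.
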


\begin{proof} Given a homomorphism $\varphi\in \Hom(\Z_{\hat p_1}, \Z_{\hat p_2})$ 
this homomorphism is determined completely by the value of $\varphi(1)$. 
But $1\in \Z_{\hat p_1}$ is divisible by $p_1^n$ for all $n$, hence also 
$\varphi(1)$ must be divisible by all powers $p_1^n$. As there is no non-zero 
element in   $\Z_{\hat p_2}$ with this property ($p_1\ne p_2$), we must have that $\varphi(1)=0$. So the result follows.
 \end{proof}

\begin{theorem}\label{countex} Let  ${\mathcal P}$ be an  infinite set of primes and consider the group 
\[  A=\bigoplus_{p\in {\mathcal P}}  \Z_{\hat p}.\]
Then $A$ has the $R_\infty$ property.
\end{theorem}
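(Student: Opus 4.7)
The plan is to exploit the rigidity supplied by Lemma \ref{hom zero}: because every homomorphism $\Z_{\hat p_1} \to \Z_{\hat p_2}$ vanishes for $p_1 \ne p_2$, any endomorphism of $A$ must respect the direct sum decomposition. More precisely, for an endomorphism $\varphi$ of $A$ and any $p \in \mathcal{P}$, the composite of the inclusion $\Z_{\hat p} \hookrightarrow A$, the map $\varphi$, and the projection onto a summand $\Z_{\hat q}$ with $q \ne p$ is the zero map by Lemma \ref{hom zero}. Hence $\varphi(\Z_{\hat p}) \subseteq \Z_{\hat p}$, giving a restriction $\varphi_p \colon \Z_{\hat p} \to \Z_{\hat p}$, and $\varphi = \bigoplus_{p \in \mathcal{P}} \varphi_p$. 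When $\varphi$ is an automorphism, applying the same argument to $\varphi^{-1}$ shows that each $\varphi_p$ is itself an automorphism of $\Z_{\hat p}$.

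With this diagonal structure in hand, $\varphi - \Id_A = \bigoplus_p (\varphi_p - \Id_{\Z_{\hat p}})$, so that
\[ A / \mathrm{Im}(\varphi - \Id_A) \;\cong\; \bigoplus_{p \in \mathcal{P}} \Z_{\hat p} / \mathrm{Im}(\varphi_p - \Id_{\Z_{\hat p}}). \]
By Proposition \ref{spect}, for any odd prime $p$ and any automorphism $\psi$ of $\Z_{\hat p}$ we have $R(\psi) \ne 1$, i.e.\ the quotient $\Z_{\hat p} / \mathrm{Im}(\psi - \Id_{\Z_{\hat p}})$ is a non-trivial abelian group. Since $\mathcal{P}$ is infinite it contains infinitely many odd primes, so the right-hand side is a direct sum with infinitely many non-trivial summands and therefore has infinite cardinality. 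This gives $R(\varphi) = \infty$, so $A$ has the $R_\infty$ property.

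The only genuine obstacle is the initial diagonalization step: one must be careful that in the direct sum, an endomorphism really is determined by its values on each summand, and that Lemma \ref{hom zero} forces each such summand to be mapped into itself. Once this is granted, everything else is bookkeeping around the already-established spectrum computation in Proposition \ref{spect}; in particular no finer information about individual automorphisms of $\Z_{\hat p}$ is needed beyond the fact that $1$ is absent from their Reidemeister spectrum when $p$ is odd.
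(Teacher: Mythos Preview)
Your proof is correct and follows essentially the same approach as the paper: you use Lemma~\ref{hom zero} to diagonalize the automorphism as $\varphi=\bigoplus_p\varphi_p$ and then invoke Proposition~\ref{spect} to get $R(\varphi_p)>1$ for the infinitely many odd primes in $\mathcal{P}$, forcing $R(\varphi)=\infty$. Your write-up is in fact more explicit than the paper's on why the diagonalization holds and why each $\varphi_p$ is an automorphism, but the underlying argument is identical.
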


\begin{proof}  It follows from Lemma~\ref{hom zero} that  $\ds \Aut(A)=\prod_{p\in {\mathcal P}} \Aut(\Z_{\hat{p}})$ i.e.\  
any automorphism $\varphi$ of $A$ can be decomposed 
as a direct product $\varphi= \ds\prod_{p \in {\mathcal P}}\varphi_p$ of (auto)morphisms $\varphi_p: \Z_{\hat p} \to  \Z_{\hat p}$.
From the previous section (Proposition~\ref{spect}) any automorphism of  $\Z_{\hat p}$ has Reidemeister number greater than 1 if $p\ne 2$. Since an infinite number of primes is different from $2$, it follows that $R(\varphi)=\infty$.

This shows that $A$ has the $R_\infty$ property.
\end{proof}

\begin{corollary} There is an uncountable number of abelian groups which have the $R_\infty$ property.
 \end{corollary}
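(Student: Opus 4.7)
The plan is to apply Theorem \ref{countex} to the uncountable family of infinite subsets of the set of primes, and then exhibit an isomorphism invariant that distinguishes the resulting groups.

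First, for each infinite set $\mathcal{P}$ of primes, let $A_\mathcal{P} = \bigoplus_{p \in \mathcal{P}} \Z_{\hat p}$. By Theorem \ref{countex}, every such $A_\mathcal{P}$ has the $R_\infty$ property. Since the set of all primes is countably infinite, the collection of its infinite subsets has cardinality $2^{\aleph_0}$, which is uncountable. It therefore suffices to show that if $\mathcal{P} \neq \mathcal{Q}$ are two distinct infinite sets of primes, then $A_\mathcal{P} \not\cong A_\mathcal{Q}$.

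To do this, I would define, for any abelian group $G$ and any prime $p$, the subgroup
\[ D_p(G) = \{ x \in G \;|\; \text{for every } n \in \N, \text{ there exists } y \in G \text{ with } p^n y = x \}, \]
and let $\Pi(G) = \{ p \text{ prime} \;|\; D_p(G) \neq 0 \}$. This set $\Pi(G)$ is manifestly an isomorphism invariant of $G$. The key computation is to verify that $\Pi(A_\mathcal{P}) = \mathcal{P}$. For the inclusion $\mathcal{P} \subseteq \Pi(A_\mathcal{P})$, note that $1 \in \Z_{\hat p}$ is divisible by every power of $p$, so the corresponding element of $A_\mathcal{P}$ lies in $D_p(A_\mathcal{P})$. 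For the reverse inclusion, suppose $p \notin \mathcal{P}$ and take any $x = (x_q)_{q \in \mathcal{P}} \in A_\mathcal{P}$ (finite support). If $x$ is infinitely $p$-divisible in the direct sum, then each coordinate $x_q$ must be infinitely $p$-divisible in $\Z_{\hat q}$; but for $q \neq p$, a nonzero element of $\Z_{\hat q}$ (a fraction with $q$-power denominator) can be written as $a/q^k$ with $a \in \Z$ and then divisibility by $p^n$ forces $p^n \mid a$ for all $n$, hence $a = 0$. This argument is essentially the same as the one used in Lemma \ref{hom zero}.

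Combining the above: if $\mathcal{P} \neq \mathcal{Q}$, then $\Pi(A_\mathcal{P}) = \mathcal{P} \neq \mathcal{Q} = \Pi(A_\mathcal{Q})$, so $A_\mathcal{P} \not\cong A_\mathcal{Q}$. Therefore the family $\{ A_\mathcal{P} \;|\; \mathcal{P} \text{ an infinite set of primes}\}$ provides $2^{\aleph_0}$ pairwise non-isomorphic abelian groups, each with the $R_\infty$ property, which proves the corollary. I do not anticipate a genuine obstacle here: the only point requiring mild care is confirming that no cancellation between coordinates can occur in $A_\mathcal{P}$ to create spurious infinitely $p$-divisible elements, but because we are taking a direct sum this is immediate coordinatewise.
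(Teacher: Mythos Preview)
Your proposal is correct and follows essentially the same approach as the paper: apply Theorem~\ref{countex} to the uncountably many infinite sets of primes, and distinguish the resulting groups via the divisibility argument underlying Lemma~\ref{hom zero}. The paper merely says the non-isomorphism ``follows using similar arguments as in Lemma~\ref{hom zero},'' whereas you have made this precise by packaging it into the explicit invariant $\Pi(G)$; this is a helpful elaboration but not a different strategy.
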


\begin{proof} Given two distinct sets of primes ${\mathcal P}$ and 
${\mathcal P}'$, the corresponding groups $\ds \bigoplus_{p\in {\mathcal P}}  \Z_{\hat p}$  and
$\ds \bigoplus_{p\in {\mathcal P'}}  \Z_{\hat p}$
are not isomorphic.  This follows using similar arguments as in  Lemma~\ref{hom zero} above. As there are an uncountable number of infinite subsets of the set of all primes, the result follows from 
Theorem~\ref{countex} above. 
\end{proof}

Observe that for a given infinite set ${\mathcal P}$ of primes we can also construct the group  $\ds \prod_{p\in {\mathcal P}}  \Z_{\hat p}$ 
instead of $\ds \bigoplus_{p\in {\mathcal P}} Z_{\hat p}$. A similar result hold for this group where now we can even say that 
the cardinality of the set of Reidemeister classes is indeed uncountable.

\begin{theorem}\label{uncountex} Let  ${\mathcal P}$ be an  infinite set of primes and consider the group  $\ds \prod_{p\in {\mathcal P}}  \Z_{\hat p}$.\\
Then for any automorphism $\varphi$ of this group, the set of Reidemeister classes is uncountable. In particular this group also has the $R_{\infty}$ property.
\end{theorem}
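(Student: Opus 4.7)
The plan is to mirror the proof of Theorem \ref{countex}, with the direct sum replaced by the direct product, exploiting the fact that an infinite product of groups of cardinality at least $2$ is uncountable. Specifically, I aim to show that every automorphism $\varphi$ of $A = \prod_{p \in \mathcal P} \Z_{\hat p}$ decomposes coordinate-wise as $\varphi = \prod_{p\in \mathcal P} \varphi_p$ with $\varphi_p \in \Aut(\Z_{\hat p})$. Given such a decomposition, $(\Id - \varphi)$ acts coordinate-wise, so
\[\frac{A}{(\Id - \varphi)(A)} \;\cong\; \prod_{p \in \mathcal P}\frac{\Z_{\hat p}}{(\Id - \varphi_p)(\Z_{\hat p})}.\]
Proposition \ref{spect} tells us that the factor at each odd prime $p$ has cardinality $R(\varphi_p) \geq 2$, and since $\mathcal P$ contains infinitely many odd primes, the right-hand side has cardinality at least $2^{\aleph_0}$, which is uncountable.

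The easy half of the decomposition is that each $\Z_{\hat p}$ is a characteristic subgroup of $A$, so $\varphi$ restricts to an automorphism $\varphi_p\in\Aut(\Z_{\hat p})$. I would show this by identifying $\Z_{\hat p}$ as the intersection $\bigcap_{n\geq 1} p^n A$: for $r\neq p$, Lemma \ref{indexm} gives $[\Z_{\hat r}:p^n\Z_{\hat r}]=p^n$, and a direct calculation shows $a/r^k\in p^n\Z_{\hat r}$ forces $p^n\mid a$, whence $\bigcap_n p^n\Z_{\hat r}=0$; meanwhile $\Z_{\hat p}$ is $p$-divisible so $p^n\Z_{\hat p}=\Z_{\hat p}$ for all $n$.

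The main obstacle is the harder half: proving that the cross-terms vanish, i.e.\ $\pi_p\circ \varphi = \varphi_p\circ \pi_p$, or equivalently that $\Hom(\prod_{r\neq p}\Z_{\hat r},\Z_{\hat p})=0$. Any such homomorphism $\psi$ vanishes on the direct sum $\bigoplus_{r\neq p}\Z_{\hat r}$ by Lemma \ref{hom zero}, but extending this to the full product is the delicate step. I would use that $\Z_{\hat p}$ is slender (being a countable, reduced, torsion-free subgroup of $\Q$ distinct from $\Q$, containing no copy of $\Z_q$ or $\Z^{\N}$). Concretely, given $a\in\prod_{r\neq p}\Z_{\hat r}$ and a prime $s\neq p$, use Lemma \ref{indexm} to split each coordinate as $a_r = s^n a_r' + d_r$ with $d_r\in\{0,1,\ldots,s^n-1\}$, so that $\psi(a)=s^n\psi(a')+\psi(d)$. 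Slenderness applied to the restriction of $\psi$ to the integer-valued subproduct $\prod_{r\neq p,\,s}\Z$ forces this restriction to factor through finitely many coordinates, each of which $\psi$ already annihilates; hence $\psi(d)=0$, so $\psi(a)\in s^n\Z_{\hat p}$ for every $n$, and $\bigcap_n s^n\Z_{\hat p}=0$ yields $\psi(a)=0$.
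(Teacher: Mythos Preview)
The paper does not actually supply a proof of Theorem~\ref{uncountex}: after the sentence ``A similar result hold[s] for this group'' it states the theorem and passes directly to the closing remark. So there is no argument in the paper to compare against, and the question is whether your proposal fills the gap correctly.

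It does, and you are right that the gap is genuine. For the direct sum in Theorem~\ref{countex}, the decomposition $\Aut\big(\bigoplus_p \Z_{\hat p}\big)=\prod_p\Aut(\Z_{\hat p})$ is immediate from Lemma~\ref{hom zero}, because a homomorphism out of a direct sum is determined by its restrictions to the summands. For the direct product this is no longer automatic, and one must show that $\Hom\big(\prod_{r\neq p}\Z_{\hat r},\,\Z_{\hat p}\big)=0$. Your route via slenderness is correct: $\Z_{\hat p}$ is countable, reduced and torsion-free, hence slender, so the restriction of any such $\psi$ to the integer subproduct $\prod_{r}\Z$ factors through finitely many coordinates, and on each of those coordinates $\psi$ vanishes by Lemma~\ref{hom zero}; your mod-$s^n$ reduction then forces $\psi(a)\in\bigcap_n s^n\Z_{\hat p}=0$ for every $a$. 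Combined with the identification $\Z_{\hat p}=\bigcap_n p^nA$ (which makes each factor characteristic), this yields the coordinate-wise decomposition $\varphi=\prod_p\varphi_p$. The conclusion
\[
A/(\Id-\varphi)A \;\cong\; \prod_{p\in\mathcal P}\Z_{\hat p}/(\Id-\varphi_p)\Z_{\hat p}
\]
then has cardinality at least $2^{\aleph_0}$ by Proposition~\ref{spect}, since all but at most one factor has at least two elements. Your proof therefore supplies exactly what the paper omits.
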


{\bf Remark: } It is easy to extend this results to obtain abelian groups of any infinite cardinality with the $R_\infty$ property. Indeed, when taking a direct
sum $M\oplus \ds \bigoplus_{p\in {\mathcal P}}  \Z_{\hat p}$ or $M\oplus  \prod_{p\in {\mathcal P}}$ where $M$ is a divisible group (of any cardinality you like), the 
resulting group again has the $R_\infty$ property by  Proposition~\ref{reduced part}.

\end{document}